\theoremstyle{plain}
\newtheorem{thm}{Theorem}[section]
\newtheorem{coro}[thm]{Corollary}
\newtheorem{lemma}[thm]{Lemma}
\theoremstyle{definition}
\theoremstyle{plain}
\newtheorem{rmk}[thm]{Remark}
\theoremstyle{definition}
\newtheorem{claim}{Claim}
\numberwithin{equation}{section}
\numberwithin{figure}{section}
\def \D{\Delta}
\def \d{\delta}
\def \dn{\text{d}}
\def \deg{\text{deg}}
\def \down{\text{down}}
\def \ex{\text{ex}}
\def \im{\text{im}}
\def \ker{\text{ker}}
\def \K{\mathcal{K}}
\def \la{\lambda}
\def \p{\partial}
\def \q{\mathfrak{q}}
\def \s{\sigma}
\def \supp{\text{supp}}
\def \R{\mathbb{R}}
\def \T{\wedge}
\def \un{\text{u}}
\def \up{\text{up}}
\DeclareMathOperator{\sgn}{sgn}
\begin{document}
\title[Spectral radius of complexes]{Signless Laplacian spectral radius of simplicial complexes without holes}

\author[Y.-Z. Fan, C.-M. She, H.-Z. Zhang]{Yi-Zheng Fan*, Chuan-Ming She, Huan-Zhi Zhang}
\address{\small Center for Pure Mathematics, School of Mathematical Sciences, \\ Anhui University, Hefei 230601, P. R. China}
\email{fanyz@ahu.edu.cn, shecm@stu.ahu.edu.cn, zhanghz@stu.ahu.edu.cn}
\thanks{*The corresponding author.
Supported by National Natural Science Foundation of China (No. 12331012).
}

\subjclass[2000]{05E45, 05C65, 05C35, 55U05}

\keywords{Simplicial complex; hypergraph; Tur\'an problem; signless Laplacian; spectral radius; Betti number}

\begin{abstract}
We study a spectral analog of the Tur\'an problem for simplicial complexes. Specifically, we consider the extremal problem of maximizing the signless Laplacian spectral radius among simplicial complexes without holes. We determine the structure of the simplicial complex attaining the maximum spectral radius, extending classical extremal results for graphs without cycles to the setting of higher-dimensional simplicial complexes.
More generally, we establish an upper bound on the signless Laplacian spectral radius of simplicial complexes with prescribed Betti numbers. As an application, using the connection between the signless Laplacian spectral radius and the face numbers of a simplicial complex, we derive bounds on Turán numbers for both hypergraphs and simplicial complexes.
Our technique involves the canonical Alexander dual of perfect matchings and coloring of simplicial complexes.
\end{abstract}

\maketitle

\section{Introduction}
In extremal combinations, a central problem  is  to determine the maximum number of edges in a graph on $n$ vertices that does not contain a given graph as a subgraph, which is known as the Tur\'an problem.
The problem naturally extends to hypergraphs.
Formally, let $F$ be an $r$-uniform hypergraph (or simply an \emph{$r$-graph}).
A $r$-graph $H$ is said to be \emph{$F$-free} if it contains no sub-hypergraphs isomorphic to $F$.
The \emph{Tur\'an number} $\ex_r(n,F)$ is the maximum number of edges in an $F$-free $r$-graph on $n$ vertices.
In the graph case (i.e. $r=2$), the celebrated Erd\H{o}s-Stone-Simonovits theorem \cite{ERD1966} shows that
$$ \ex_2(n,F)=(1+o(1))\left(1-\frac{1}{\chi(F)-1}\right)\binom{n}{2},$$
where $\chi(F)$ denotes the chromatic number of $F$.
The problem becomes difficult in the hypergraph case when $r \ge 3$.
Let $\D_n^r$ denote the complete $r$-graph on $n$ vertices.
Even for the first nontrivial case of $r=3$ and $n=4$,
the exact value of the Tur\'an number $\ex_r(n,\D_{4}^{3})$ remains unknown.
Tur\'an conjectured the following exact formula \cite{Tur1941}:
\begin{equation}\label{Turan}
\ex(n,\D_{4}^{3})=
\begin{cases}\frac{t^{2}(5t-3)}{2}, & \text{if~} n=3t; \\
\frac{t(5t^2+2t-1)}{2}, & \text{if~} n=3t+1; \\
\frac{t(t+1)(5t+2)}{2}, & \text{if~} n=3t+2.
\end{cases}
\end{equation}
The \emph{Tur\'an density} of $F$, denoted by $\pi(F)$ \cite{KNS1964},  is defined as
$$\pi(F)=\lim_{n \to \infty}\frac{\ex_r(n,F)}{\binom{n}{r}}.$$
The above conjecture, if true, would give $\pi(\D_{4}^{3})=\frac{5}{9}$.
The best known upper bound is $\pi(\D_{4}^{3})\le\frac{3+\sqrt{17}}{12}$, due to Chung and Lu \cite{CL1999}.
A general upper bound
\begin{equation}\label{DeCaen}
\pi(\D_{r+1}^{r})\le\frac{r-1}{r},
\end{equation}
was established by de Caen \cite{DE1983}, and subsequently improved for odd values of $r$ in \cite{CL1999} and for even values in \cite{LZ2009}.
Beyond this, various Turán-type problems have been explored, such as determining the Turán density of $\D_{t+1}^r$ \cite{M2006}, the Tur\'an number of expansion of the Fan graph \cite{MP2007}, and the Berge hypergraph \cite{GP2017}.
In particular, the Tur\'an numbers of linear hypergraphs have attracted significant attention; see \cite{RS1978,EFR1986,LV2003,Tim2017,FO2017,GMV2019,EGM2019,FG2020,GC2021}.

The spectral version of the hypergraph Tur\'an problem has attracted a great deal of attention in recent years through the use of adjacency tensors.
Keevash, Lenz, and Mubayi~\cite{KLM2014} established two general criteria under which spectral extremal results may be deduced from `strong stability' forms of the corresponding extremal results.
A growing line of work focuses on determining the maximum spectral radius of linear $r$-uniform hypergraphs that forbid certain subhypergraphs; see  \cite{HCC2021,GCH2022,SFKH2023,SFK2025}.

In this paper we view the hypergraphs from the perspective of simplicial complexes (or simply called complexes).
An $r$-graph can be regarded as a pure (abstract) simplicial complex of dimension $r-1$,
where all subsets of each edge are regarded as faces of the simplicial complex.
Under this correspondence, the complete $r$-graph $\D_{r+1}^r$ corresponds to a pure $(r-1)$-dimensional complex on $r+1$ vertices, whose facets are all possible $r$-subsets.
Consequently, the Tur\'an number $\ex_r(n,\D_{r+1}^{r})$ is exactly the maximum number of facets in a pure $(r-1)$-dimensional complex that does not contain a subcomplex isomorphic to  $\D_{r+1}^r$.
In general, for $\mathcal{H}$, a family of $r$-dimensional complexes, denote by $\ex(n,\mathcal{H})$ the maximum number of $r$-faces in a pure $r$-dimensional complex on $n$ vertices that does not
contain any complex in $\mathcal{H}$ as a subcomplex.
Usually, $\mathcal{H}$ is some collection of subcomplexes that are all homeomorphic to a
particular fixed topological space, e.g., a sphere $S^r$ of dimension $r$.

Brown, Erd\H{o}s, and S\'os \cite{SEB1973} proved that
\begin{equation}\label{nS2}
\ex(n,S^2)=\Theta(n^{\frac{5}{2}}).
\end{equation}
More recently, Newman and Pavelka \cite{NP2024} provided a conditional lower bound for $\ex(n,S^r)$ and showed that
\begin{equation}\label{nSr}
\ex(n,S^r)\le O\left(n^{r+1-1/2^{r-1})}\right).
\end{equation}
In general, homeomorph Tur\'an problems for arbitrary complexes have been investigated by Keevash, Long, Narayanan, and Scott \cite{KLNS2021}, and by Long, Narayanan, and Yap~\cite{LNY2022}.
The asymptotics for homeomorphs of fixed orientable surfaces in $2$-dimensional complexes were established by Kupavskii, Polyanskii, Tomon, and Zakharov \cite{KPTZ2022},
and those for homeomorphs of fixed non-orientable surfaces were established by Sankar \cite{San}.

In this paper, we investigate the spectral version of the Tur\'an problem of simplicial complexes using the signless Laplacian operator.
Let $K$ be a pure $r$-dimensional complex, $Q_{r-1}^{\up}(K)$ be the $(r-1)$-th up signless Laplacian operator of $K$, and $\q_{r-1}(K)$ be the spectral radius of $Q_{r-1}^{\up}(K)$.
We first prove a key lemma for characterizing the complex with the maximum number of facets or the maximum spectral radius.

\begin{lemma}\label{pathcon}
Let $K$ be a pure $r$-dimensional complex on $n$ vertices.
There exists a pure $r$-dimensional complex $L$ on $n$ vertices with the following properties:

{\em(1)} $L$ contains $K$ as a subcomplex;

{\em(2)} $L$ contains all possible $(r-1)$-faces;

{\em(3)} $L$ is $(r-1)$-path connected;

{\em(4)} $L$ contains no new $r$-holes except those of $K$.

\end{lemma}

Let $\K(n,r,\beta)$ denote the set of pure $r$-dimensional complexes on $n$ vertices with the $r$-th Betti number $\beta$.
By Lemma \ref{pathcon}, we immediately obtain the following corollary on the maximum number of facets.

\begin{coro}\label{e-con}
Let $K$ be a pure $r$-dimensional complex on $n$ vertices with the maximum number of facets among all complexes in $\mathcal{K}(n,r,\beta)$.
Then $K$ contains all possible $(r-1)$-faces and  is $(r-1)$-path connected.
\end{coro}

Corollary \ref{e-con} is a generalization of the graph case, namely, the $1$-dimensional complexes.
Let $G$ be a simple graph on $n$ vertices with $c$ connected components, and let $e(G)$ be the number of edges of $G$.
Then, the first Betti number $\beta_1(G)=e(G)-n+c$, which implies that
$$e(G) \le n+\beta_1(G)-c.$$
So, if $G$ has the maximum number of edges among all graphs over $\mathcal{K}(n,1,\beta)$, then $c=1$, namely, $G$ is connected.

By Lemma \ref{pathcon}, we can also deduce the following result on the maximum
spectral radius.

\begin{coro}\label{s-con}
Let $K$ be a pure $r$-dimensional complex on $n$ vertices with maximum spectral radius $\q_{r-1}(K)$ among all complexes over $\mathcal{K}(n,r,\beta)$.
Then $K$ contains all possible $(r-1)$-faces and is $(r-1)$-path connected.
\end{coro}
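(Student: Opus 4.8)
The plan is to mimic the structure of the proof of Corollary \ref{e-con}, but replacing the counting of facets with a comparison of spectral radii via the Rayleigh-quotient characterization \eqref{Q-equ-r}. Let $K \in \mathcal{K}(n,r,\beta)$ achieve the maximum value of $\q_{r-1}(K)$. Suppose for contradiction that $K$ either fails to contain all $(r-1)$-subsets of its vertex set or is not $(r-1)$-path connected. Apply Lemma \ref{pathcon} to obtain a pure $r$-dimensional complex $L$ on $n$ vertices that contains $K$ as a subcomplex, contains all $(r-1)$-faces, is $(r-1)$-path connected, and introduces no new $r$-holes beyond those already in $K$.

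The first key point is that $L$ is still an admissible competitor, i.e. $L \in \mathcal{K}(n,r,\beta)$. Since $L$ contains $K$ as a subcomplex and no $r$-holes were created, $\beta_r(L) = \beta_r(K) = \beta$; here one should note that adding $r$-faces (the top-dimensional faces) cannot destroy an existing $r$-hole either, since $C_{r+1}(L,\R)=0$ so $\im \p_{r+1}=0$ and $H_r(L) = \ker \p_r^{(L)}$, which can only grow — but the ``no new holes'' clause of Lemma \ref{pathcon} is exactly what pins $\beta_r$ down, so $L$ lies in $\mathcal{K}(n,r,\beta)$. The second key point is strict monotonicity of the spectral radius under the passage $K \to L$. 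Since $S_{r-1}(K) \subseteq S_{r-1}(L)$, any $f \in D_{r-1}(K,\R)$ extends by zero to $\tilde f \in D_{r-1}(L,\R)$ with $\langle \tilde f, \tilde f\rangle = \langle f, f\rangle$; and since $S_r(K) \subseteq S_r(L)$, every term $f(\p \bar F)^2$ appearing in the numerator of \eqref{Q-equ-r} for $K$ also appears for $L$, with possibly extra nonnegative terms from the new $r$-faces. Hence $\q_{r-1}(L) \ge \q_{r-1}(K)$. To get a strict inequality when $L \ne K$, I would take $f$ to be a Perron-type vector. If $L$ has strictly more $r$-faces than $K$, pick an $r$-face $\bar F_0 \in S_r(L)\setminus S_r(K)$ and perturb the (zero-extended) Perron vector of $Q_{r-1}^{\up}(K)$ slightly on an $(r-1)$-face of $\p\bar F_0$, or alternatively pass through an intermediate complex obtained by adding one $r$-face at a time and invoke irreducibility of $Q_{r-1}^{\up}(\cdot)$ together with the standard fact that for irreducible nonnegative matrices, a proper entrywise increase strictly increases the Perron eigenvalue. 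If $L = \bar K$ merely has extra $(r-1)$-faces and the same $r$-faces as $K$, then $\q_{r-1}(L)=\q_{r-1}(K)$, so this case must be handled by observing that Lemma \ref{pathcon} then forces a genuine addition of an $r$-face during the path-connecting step (as $\bar K$ is not $(r-1)$-path connected), giving the strict gain there.

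The third step is to derive the contradiction: $\q_{r-1}(L) > \q_{r-1}(K)$ with $L \in \mathcal{K}(n,r,\beta)$ contradicts the maximality of $K$. Therefore $K$ already coincides with $L$, meaning $K$ contains all $(r-1)$-faces and is $(r-1)$-path connected, which is the assertion.

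The main obstacle I anticipate is the strictness of the spectral inequality, and in particular the edge case where Lemma \ref{pathcon} only adds $(r-1)$-faces without adding any $r$-face — one must argue that whenever $K$ is not already $(r-1)$-path connected, the connecting procedure genuinely inserts at least one new $r$-face $\bar F = F_1 \cup F_2$, and that this insertion strictly raises $\q_{r-1}$. The cleanest route is to reduce everything to a single elementary move ``add one $r$-face $\bar F$ to a complex $M$'' and prove: (i) if $M$ is $(r-1)$-path connected (so $Q_{r-1}^{\up}(M)$ is irreducible) then adding $\bar F$, whose boundary faces already lie in $M$, strictly increases the spectral radius, by the Perron-Frobenius strict-monotonicity argument applied to the entrywise-larger matrix $Q_{r-1}^{\up}(M+\bar F)$; and (ii) the non-strict inequality $\q_{r-1}(M+\bar F)\ge \q_{r-1}(M)$ always holds by zero-extension. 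One subtlety worth flagging: $Q_{r-1}^{\up}(M)$ acts on $D_{r-1}(M,\R)$ whereas $Q_{r-1}^{\up}(M+\bar F)$ may act on a strictly larger space (if $\bar F$ brings in a new $(r-1)$-face), so the ``entrywise comparison'' should be phrased for the common ambient space $D_{r-1}$ of the complex with all $(r-1)$-faces present, which is precisely why we first pass to $\bar K$.
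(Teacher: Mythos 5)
Your proposal follows essentially the same route as the paper: invoke Lemma \ref{pathcon} to produce a competitor $L\in\mathcal{K}(n,r,\beta)$ properly containing $K$, note that property (4) keeps $\beta_r$ fixed, and conclude $\q_{r-1}(K)<\q_{r-1}(L)$ by Perron--Frobenius, contradicting maximality. The strictness issue you flag is handled in the paper by observing that $Q_{r-1}^{\up}(K)$ is either a proper principal submatrix of the irreducible matrix $Q_{r-1}^{\up}(L)$ or is entrywise $\lneqq Q_{r-1}^{\up}(L)$, which is a slightly cleaner packaging of the same Perron--Frobenius monotonicity argument you describe.
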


Corollary \ref{s-con} is also a generalization of the graph case.
If $G$ is a graph with maximum spectral radius $\q_0(G)$ among all graphs over $\mathcal{K}(n,1,\beta)$, then $G$ must be connected.
Otherwise, we can add some edges to $G$ such that the resulting graph $G'$ is connected and has the same first Betti number as $G$.
However, $\q_0(G') > \q_0(G)$ by the Perron-Frobenius theorem, yielding a contradiction.

In the following, we present an upper bound for the spectral radius of a pure $r$-dimensional complex without $\Delta_{r+2}^{r+1}$, and characterize the equality case by using the $r$-neighbor uniform property.
An $r$-complex $K$ is called \emph{$r$-neighbor uniform}, if for any $r$-face $F$ and any vertex $u \notin F$, $|N^\dn(F,u)|=r$, where
\begin{equation}\label{NdFu}
 N^\dn(F,u):=\left\{G\cup\left\{u\right\}\in S_r(K):G\in\partial F\right\},
\end{equation}
that is, $N^\dn(F,u)$ is the set of down-neighbors of $F$ that contain the vertex $u$.

\begin{thm}\label{main0}
Let $K$ be a pure $r$-dimensional complex on $n$ vertices without $\Delta_{r+2}^{r+1}$.
Then
$$\q_{r-1}(K)\le rn-r^2+1,$$
with equality if and only if  $K$ is $r$-neighbor uniform.
\end{thm}

We characterize the structure of a pure $r$-dimensional complex which is $r$-neighbor uniform.
An $r$-dimensional \emph{tented complex} on $n$ vertices, denoted by $\T_n^{r+1}$, is a pure $r$-dimensional complex whose facets are all $(r+1)$-subsets of $[n]$ that contain a fixed vertex; see Fig. \ref{TR} for the tented complex $\T_6^3$ for illustration.

\begin{thm}\label{NeiU}
Let $r\geq 2$ be an integer. Let $K$ be a pure $r$-dimensional simplicial
complex on $n$ vertices, which is $r$-neighbor uniform.
If $r$ is even, or $r$ is odd and $n \geq (r-2)!!(r+1)+r+2$, then $K \cong \wedge_n^{r+1}$.
\end{thm}

We finally arrive at the main result of this paper.

\begin{thm}\label{main}
Let $K$ be a pure $r$-dimensional complex on $n$ vertices without $\Delta_{r+2}^{r+1}$.
Then
\begin{equation}\label{Eq_beta0}
\q_{r-1}(K)\le rn-r^2+1.
\end{equation}
Furthermore,

{\em(1)} if $r=1$, the equality in \eqref{Eq_beta0} holds if and only if $K$ is a complete bipartite graph;

{\em(2)} if $r \ge 2$ is even, or $r$ is odd and $n \geq (r-2)!!(r+1)+r+2$, the equality in \eqref{Eq_beta0} holds if and only if  $K\cong \T_n^{r+1}$.
\end{thm}

By Remark \ref{oddRn}, if we have no requirement for $n$ when $r$ is odd, then $K$ contains a double pyramid $\Diamond_{r+3}^{r+1}$.
Surely, if a complex contains no homeomorphs of $S^r$, then it contains no $\Delta_{r+2}^{r+1}$ or $\Diamond_{r+3}^{r+1}$.
So, we have the following corollary from Theorem \ref{main}.

\begin{coro}\label{noSr}
Let $K$ be a pure $r$-dimension complex on $n$ vertices without homeomorphs of  $S^r$.
Then
\begin{equation}\label{sphere}
\q_{r-1}(K)\le rn-r^2+1,
\end{equation}
with equality holds if and only if $ K\cong\T_n^{r+1}$.
\end{coro}

If a complex contains no $r$-holes, then it contains no homeomorphs of $S^r$. Therefore, we also have the following corollary.

\begin{coro}\label{main-hole}
Let $K$ be a pure $r$-dimension simplicial complex on $n$ vertices without  $r$-holes.
Then
$$
\q_{r-1}(K)\le rn-r^2+1,
$$
with equality holds if and only if $K \cong \T_n^{r+1}$.
\end{coro}

It is known that for a graph $G$ on $n$ vertices without cycles (namely, forests), $\q_0(G) \le n$, with equality if and only if $G$ is a star, namely, $\T_n^2$.
So Corollary \ref{main} generalizes the graph case.

Note that $\T_n^{r+1}$ is a cone over the complete $(r-1)$-skeleton a simplicial complex on $n-1$ vertices.
Consequently, for all $i=1,\ldots,r$, $\beta_i(\T_n^{r+1})=0$, which implies that $\T_n^{r+1}$ contains no $i$-holes for any $i \in [r]$.
So, we can also deduce the following result.

\begin{coro}\label{main2}
Let $K$ be a pure $r$-dimension simplicial complex on $n$ vertices without any $i$-holes for $i \in [r]$.
Then
$$
\q_{r-1}(K)\le rn-r^2+1,
$$
with equality holds if and only if $K \cong \T_n^{r+1}$.
\end{coro}

Finally, we provide an upper bound on the signless Laplacian spectral radii of $r$-dimensional complexes with a prescribed Betti number.

\begin{thm}\label{gen}
Let $K$ be a pure $r$-dimensional complex on $n$ vertices with $\beta_{r}(K)=t$,
where $1 \le t\le n-r-1$.
Then
\begin{equation}\label{max=beat=t}
\q_{r-1}(K)\le rn-r^2+t+1.
\end{equation}
\end{thm}

By a relationship between the number of facets of a pure $r$-dimensional complex $K$ and the spectral radius $\q_{r-1}(K)$ of $K$ (see Lemma \ref{connec}),
we obtain an upper bound for the Tur\'an number $\ex(n, \D_{r+1}^r)$ in Corollary \ref{HypTur}, and exact numbers  $\ex(n, \D_{5}^4)$ for $n=6,7,8$ in Remark \ref{Dconterexample}.
We can also derive some upper bounds for the Tur\'an density in Remark \ref{Density} or for the Tur\'an number of $S^r$ in Remark \ref{TuranS}.

The paper is organized as follows.
In Section \ref{sec2}, we introduce some basic notions about hypergraphs, simplicial complex and homology group.
In Section \ref{sec3-1}, we prove a  basic property of an $r$-complex that maximizes the facet number or the spectral radius among all $r$ complexes in $\mathcal{K}(n,r,\beta)$ (Corollaries \ref{e-con} and \ref{s-con}).
In Section \ref{sec3-2}, we determine the $r$-complex that is $r$-neighbor uniform (Theorem \ref{NeiU}).
In Section \ref{sec3-3}, we give an upper bound for the spectral radius of an $r$-complex $K$ without $\Delta_{r+2}^{r+1}$, and prove that $K$ attains the upper bound if and only if it is $r$-neighbor uniform.
By using Theorem \ref{NeiU}, we show that the tented complex is the unique complex without $\Delta_{r+2}^{r+1}$ that maximizes the spectral radius.
Our technique involves the canonical Alexander dual of perfect matchings and coloring of simplicial complexes.

\section{Preliminaries}\label{sec2}

\subsection{Hypergraph, simplicial complex and homology group}

A \emph{hypergraph} $H$ is a pair $(V,E)$ consisting of a vertex set $V=:V(H)$ and an edge set $E=:E(H)$, where each element of $E$ is a subset of $V$.
If each edge $e \in E$ has cardinality $r$, then $H$ is called \emph{$r$-uniform}.
An \emph{abstract simplicial complex} (or simply a \emph{complex}) $K$ on a finite set $V$ is a collection of subsets of $V$ that is closed under inclusion.
An \emph{$i$-face} or an \emph{$i$-simplex} of $K$ is an element of $K$ with cardinality $i+1$.
The collection of all $i$-faces of $K$ is denoted by $S_i(K)$.
Usually, we call the $0$-faces the \emph{vertices} of $K$, and the $1$-faces the \emph{edges} of $K$.
The \emph{dimension of an $i$-face} is $i$, and the \emph{dimension of a complex $K$} is the maximum dimension of a face in $K$.
The faces that are maximal under inclusion are called \emph{facets}.
We say $K$ is \emph{pure} if all facets have the same dimension.
By definition, a complex $K$ can be viewed as a hypergraph with all facets as edges, and a hypergraph can also be viewed as a complex with all subsets of edges forming the collection.

For two $(i+1)$-simplices of a complex $K$ sharing an $i$-face, we refer to them as \emph{$i$-down neighbors}.
For two $i$-simplices of $K$ which are faces of an $(i+1)$-simplex, we call them \emph{$(i+1)$-up neighbors}.
The complex $K$ is said to be \emph{$i$-path connected} if, for any two $i$-faces $F$ and $G$ of $K$, there exists an ordered sequence of $i$-faces $F_1,F_2,\ldots,F_m$ such that $F_i$ and $F_j$ are $(i+1)$-up neighbors if and only if $|i-j|=1$, where $F=F_1$ and $G=F_m$.
The above sequence of $i$-faces is called an \emph{$i$-path} connecting $F$ and $G$.
For a face $F$ of $K$, we denote by $N^{\dn}(F)$ the set of down neighbors and by
$N^{\un}(F)$ the set of up neighbors of $F$, respectively.
The \emph{degree} of an $i$-face $F$ is the number of $(i+1)$-faces of $K$ that contain $F$, denoted by $\deg(F)$.

A face $F$ is \emph{oriented} if we choose an ordering on its vertices and write $[F]$.
Two orderings of the vertices are said to determine the same orientation if there is an even permutation transforming one ordering into the other.
If the permutation is odd, then the orientations are opposite.
The \emph{$i$-th chain group} $C_i(K,\R)$ of $K$ with coefficients in $\R$ is a vector space over the field $\R$ with basis $B_i(K,\R)=\{[F]\mid F\in S_i(K)\}$.
The \emph{$i$-th boundary map} $\partial_{i}:C_{i}(K,\mathbb{R})\to C_{i-1}(K,\mathbb{R})$ is defined by \[\partial_i[v_{0},\cdots,v_{i}]=\sum_{j=0}^{i}(-1)^{j}[v_{0},\cdots,\hat{v}_{j},\cdots ,v_{i}],\]
where $\hat{v}_{j}$ indicates that the vertex $v_j$ is omitted.
This gives rise to the chain complex of $K$:
\[\cdots \longrightarrow C_{i+1}(K,\mathbb{R})\overset{\partial_{i+1}}{\longrightarrow}
C_i(K,\mathbb{R})\overset{\partial_{i}}{\longrightarrow}C_{i-1}(K,\mathbb{R})
\longrightarrow \cdots \longrightarrow C_0(K,\mathbb{R})\longrightarrow 0,\]
satisfying $\partial_i\circ\partial_{i+1}=0$.
The kernel of $\p_i$, denoted by  $\ker \partial_{i}$, is called the \emph{group of $i$-cycles}.
The image of $\p_{i+1}$, denoted by $\im\partial_{i+1}$, is called the \emph{group of $i$-boundaries}.
The quotient
$H_i(K)=\ker\partial_{i}/\im \thinspace\partial_{i+1}$
 is called the \emph{$i$-th homology group of $K$}.
Its dimension, denoted by $\beta_i(K)$, is called the \emph{$i$-th Betti number} of $K$.
A nonzero element of $H_i(K)$ is referred to as an \emph{$i$-dimensional hole} or simply an \emph{$i$-hole}.

The \emph{$i$-th cochain group} $C^i(K,\R)$ is defined as the dual space of $C_i(K,\R)$, i.e.,  $C^i(K,\R)= \text{Hom}(C_i(K,\R),\R)$, which is generated by the  dual basis $B_i^*(K,\R)=\{[F]^*: F \in S_i(K)\}$, where $[F]^*([F])=1$ and $[F]^*([F'])=0$ for $F' \ne F$.
The \emph{$i$-th coboundary map} $\d_{i}:C^{i}(K,\mathbb{R})\to C^{i+1}(K,\mathbb{R})$ is defined by $\d_{i}f=f\p_{i+1}$, that is,
\[(\d_{i}f)[v_{0},\cdots,v_{i+1}]=\sum_{j=0}^{i+1}(-1)^{j}f([v_{0},\cdots,\hat{v}_{j},\cdots ,v_{i+1}]).\]
	
Endowing $C^i(K,\mathbb{R})$ and $C^{i+1}(K,\mathbb{R})$ with positive definite inner products, respectively, we obtain
the adjoint $\delta_i^* : C^{i+1}(K,\mathbb{R}) \to C^i(K,\mathbb{R})$ of $\delta_i$, which is defined by
\[
(\delta_i f_1, f_2)_{C^{i+1}} = (f_1, \delta_i^* f_2)_{C^i}
\]
for all $f_1 \in C^i(K,\mathbb{R})$, $f_2 \in C^{i+1}(K,\mathbb{R})$.
	
Three Laplace operators \cite{HJ2013} on $C_i(K,\R)$ are defined as
\begin{equation}\label{Laplace}
L_i^{\up}(K)=\d_{i}^*\d_{i}, ~ L_i^{\down}(K)=\d_{i-1}\d_{i-1}^*, ~
L_i(K)=\d_{i}^*\d_{i} + \d_{i-1}\d_{i-1}^*
\end{equation}
which are referred to as  the \emph{$i$-th up Laplace operator}, \emph{$i$-th down Laplace operator} , and \emph{$i$-th Laplace operator} of $K$, respectively \cite{DR2002}.
Eckmann \cite{ECK1944} proved the discrete version of the Hodge theorem (see also \cite{DR2002,HJ2013}), which can be formulated as
$\ker L_i(K) \cong H_i(K,\R)$.

Finally, we note that extensive work has been done on the spectrum of the Laplacian of complexes and its relation to combinatorial properties; see
\cite{DR2002,HJ2013,GOL2017,BGP2019,FSW2023,FWW2025,SWF2025}

\subsection{Signless Laplacian of a simplicial complex}
The signless Laplacian of a simplicial complex was introduced in \cite{KO2020}, and it was systematically studied with many interesting applications \cite{KL2014,Lub2014}.
It was further extended to the signless $1$-Laplacian for investigating the combinatorial properties of a complex \cite{LZ2020}.
In addition, one can refer to \cite{SWF2025} for normalized Laplacians eigenvalues.

Consider the vector space $D_i(K,\R)$ over $\R$ generated by all $i$-faces of $K$.
The \emph{$i$-th signless boundary map}
 $|\p_i|: D_i(K,\R) \to D_{i-1}(K,\R)$ is defined by
$$ |\p_i|\{v_0,\ldots,v_i\}=\sum_{j=0}^i \{v_0,\ldots,\hat{v}_j, \ldots, v_i\},$$
where $\hat{v}_j$ denotes the omission of the vertex $v_j$.
Let $D^i(K,\R)$ be the dual space of $D_i(K,\R)$ that is generated by dual basis $\{F^*: F \in S_i(K)\}$.
The \emph{$i$-th signless coboundary map} $|\d_i|: D^i(K,\R) \to D^{i+1}(K,\R)$ is defined by $ |\d_i| f =f |\p_{i+1}|$ for each $f \in D^i(K,\R)$.

For each $i$, endow $D^i(K,\R)$ with a positive inner product that makes
 the dual basis $\{F^*: F \in S_i(K)\}$ orthonormal. (One can define a general positive inner product over $D^i(K,\R)$.)
Let $|\d_i|^*:D^{i+1}(K,\R) \to D^i(K,\R)$ be the adjoint of $|\d_i|$,
which satisfies
$$\langle |\d_i| f, g \rangle_{D^{i+1}(K,\R)}=\langle f, |\d_i|^* g \rangle_{D^{i}(K,\R)}$$
for all $f \in D^i(K,\R)$ and $g \in D^{i+1}(K,\R)$.
The \emph{$i$-th up signless Laplace operator} and \emph{$i$-th down signless Laplace operator} of $K$ are respectively defined by
\begin{equation}\label{SLaplace}
Q_i^{\up}(K)=|\d_{i}|^* |\d_{i}|, ~ Q_i^{\down}(K)=|\d_{i-1}| |\d_{i-1}|^*.
\end{equation}

Since $Q_i^{\up}(K)$ shares the same eigenvalues as $Q_{i+1}^{\down}(K)$, except for the zero eigenvalue, we focus on discussing $Q_i^{\up}(K)$ only.
By definition, for $f \in D^i(K,\R)$,
\begin{equation}\label{ABd}
 |\d_{i}| f = \sum_{\bar{F}\in S_{i+1}(K)} \left(\sum_{F \in \p \bar{F}} f(F) \right) \bar{F},
\end{equation}
where $\p \bar{F}$ denotes the set of $i$-faces in the boundary of $\bar{F}$.
Denote
$$f(\p \bar{F}):=\sum_{F: F \in \p \bar{F}} f(F).$$
Thus, by \eqref{ABd}, we have
\begin{equation}\label{Qev}
(Q_i^{\up}(K)f)(F)=\sum_{\bar{F}\in S_{i+1}(K): F \in \p \bar{F}} f(\p \bar{F})=
\deg(F) f(F) + \sum_{F' \in N^\un(F)} f(F').
\end{equation}
Similarly, we have
\begin{equation}\label{Qevdown}
(Q_i^{\down}(K)f)(F)=|F| f(F) + \sum_{F' \in N^\dn(F)} f(F').
\end{equation}
If $f$ is an eigenvector of $Q_i^{\up}(K)$ associated with an eigenvalue $\la$,
then, by \eqref{Qev}, for any $\bar{F}\in S_{i+1}(K)$,
\begin{equation}\label{Q-equ-F}
\begin{split} \la f(\p \bar{F}) &=\sum_{F \in \p \bar{F}} \la f(F) \\
&=\sum_{F \in \p \bar{F}} (Q_i^{\up}(K)f)(F) \\
&=\sum_{F \in \p \bar{F}} \sum_{\bar{F'}: F \in \p \bar{F'}}f(\p \bar{F'})\\
&=|\bar{F}|f(\p \bar{F})+\sum_{\bar{F'} \in N^\dn(\bar{F})}f(\p \bar{F'}).
\end{split}
\end{equation}

In addition, by \eqref{ABd}, we have
\begin{equation}\label{Qform}
\langle Q_i^{\up}(K)f, f \rangle=\langle |\d_{i}|^* |\d_{i}| f, f \rangle
=\langle |\d_{i}| f , |\d_{i}| f \rangle
= \sum_{\bar{F}\in S_{i+1}(K)} f(\p \bar{F})^2,
\end{equation}
Observe that $Q_i^{\up}(K)$ is nonnegative, symmetric, and positive semidefinite, and it is irreducible if and only if $K$ is $i$-path connected.
By the Perron-Frobenius theorem for nonnegative matrices,
the largest eigenvalue of $Q_i^{\up}(K)$ is exactly its spectral radius, denoted by $\q_i(K)$.
So, by \eqref{Qform}, we have the following expression for the spectral radius:
\begin{equation}\label{Q-equ-r}
\q_i(K)=\max_{f \in D_i(K,\R)\setminus \{0\}} \frac{\langle Q^{\up}_i(K)f, f \rangle}{\langle f, f \rangle}=
\max_{f \in D_i(K,\R)\setminus \{0\}} \frac{\sum_{\bar{F}\in S_{i+1}(K)} f(\p\bar{F})^2}{\langle f, f \rangle}.
\end{equation}
If $Q_i^{\up}(K)$ is further irreducible, then, up to a positive scalar,  there exists a unique positive eigenvector associated with $\q_i(K)$, called the \emph{Perron vector} of $Q_i^{\up}(K)$.

By using Eq. \eqref{Q-equ-r} and taking $f$ to be an all-one vector, we obtain the following result, which provides a connection between the spectral radius and the number of facets.

\begin{lemma}\label{connec}
Let $K$ be a pure $(i+1)$-dimensional complex that contains all possible $i$-faces. Then
$$|S_{i+1}(K)| \le \frac{\q_{i}(K)\binom{n}{i+1}}{(i+2)^2}.$$
\end{lemma}

\section{Spectral radius of  complexes without holes}\label{sec3}
In this section, we will investigate the signless Laplacian spectral radius $\q_{r-1}(K)$ of a pure $r$-dimensional complex $K$.
For simplicity, we call $\q_{r-1}(K)$ the \emph{spectral radius of $K$}.

\subsection{Connectedness}\label{sec3-1}
In this section, we will show that if $K$ is a pure $r$-dimensional complex whose spectral radius achieves the maximum among all complexes in $\mathcal{K}(n,r,\beta)$, then $K$ contains all possible $(r-1)$-faces and is $(r-1)$-path connected.

Before proceeding with the proof, we first introduce some preliminary definitions.
The \emph{ $i$-th incidence graph} of  a complex $K$, denoted by $B_i(K)$, is a bipartite graph with vertex set $S_i(K) \cup S_{i+1}(K)$, where $\{F,\bar{F}\}$ is an edge of $B_i(K)$ if and only if $F \in \p \bar{F}$.
It follows that $K$ is $i$-path connected if and only if $B_i(K)$ is connected.

We give a geometric interpretation of $r$-holes.
Let $h$ be an $r$-hole of an $r$-dimensional complex $K$, viewed as an element of $H_r(K)$.
Then $h$ is a linear expression of oriented $r$-faces of $K$.
Let $\supp(h)$ denote the set of $r$-faces that appear with nonzero coefficients in the linear expression of $h$, and let $K_h$ denote the subcomplex of $K$ induced by $\supp(h)$.
Clearly, $\beta_r(K_h) \ge 1$.
We call $K_h$ a \emph{basic $r$-hole} if $\beta_r(K_h)=1$.
Equivalently, a basic $r$-hole $H$ is a minimal pure $r$-dimensional complex with $\beta_r(H)=1$, in the sense that removing any $r$-face $F$ from $H$ yields $\beta_r(H-F)=0$.

For $r=1$ (graph case), a basic $1$-hole is exactly a cycle.
However, there are many possibilities for two-dimensional or higher holes.
For example, the triangulation of a $2$-dimensional sphere and that of a $2$-dimensional torus are both basic $2$-holes.
We have the following general result from the intuition of cycles, spheres, and tori.

\begin{lemma}\label{hole}
Let $K$ be a basic $r$-hole, where $r \ge 2$.
Then the following results hold.

{\em (1)} $K$ is $(r-1)$-path connected.

{\em (2)} $d_K(F)\ge 2$ for any $(r-1)$-face $F$ of $K$.

{\em (3)} $K - \bar{F} $ is $(r-1)$-path connected for any $r$-face $\bar{F}$ of $K$.

\end{lemma}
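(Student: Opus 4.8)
The plan is to work entirely with the chain complex over $\R$ and exploit the defining property of a basic $r$-hole: $\beta_r(K)=1$ but $\beta_r(K-\bar F)=0$ for every $r$-face $\bar F$. Since $\beta_r(K)=\dim \ker \p_r / \im\, \p_{r+1} = 1$ and (as $K$ has dimension $r$) $\im\,\p_{r+1}=0$, we actually have $\dim\ker\p_r = 1$; fix a generator $z = \sum_{\bar F \in S_r(K)} c_{\bar F}[\bar F]$ of $\ker\p_r$. Removing an $r$-face $\bar F_0$ kills the $r$-th homology, so $\ker \p_r^{K-\bar F_0} = 0$; this forces $c_{\bar F_0}\neq 0$ for \emph{every} $\bar F_0$, because if $c_{\bar F_0}=0$ then $z$ would still live in $C_r(K-\bar F_0,\R)$ and be a nonzero cycle there. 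Hence every coefficient of $z$ is nonzero. This single observation — call it ($\star$) — is the engine behind all three parts, and establishing it cleanly is the first step.

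For part (2): fix an $(r-1)$-face $F$. The coefficient of $[F]$ in $\p_r z$ is $\sum_{\bar F \supset F} \pm c_{\bar F} = 0$. If $d_K(F)=0$ there is nothing to prove (vacuously $F$ is not in the relevant range, or we may restrict to faces that are actually boundaries of $r$-faces — I should state the lemma for $(r-1)$-faces contained in some $r$-face, matching the intended use). If $d_K(F)=1$, the sum has a single term $\pm c_{\bar F}=0$, contradicting ($\star$). So $d_K(F)\ge 2$.

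For part (1): suppose $B_{r-1}(K)$ is disconnected, i.e. $S_{r}(K)$ splits as $A \sqcup B$ with no $r$-face of $A$ sharing an $(r-1)$-face with one of $B$. Then $\p_r$ decomposes as a direct sum, so $\ker\p_r$ also decomposes, and writing $z = z_A + z_B$ with $z_A$ supported on $A$ and $z_B$ on $B$, both $z_A$ and $z_B$ are cycles. By ($\star$) both are nonzero (each has nonzero coefficients), so $\dim\ker\p_r \ge 2$, contradicting $\beta_r(K)=1$. Hence $K$ is $(r-1)$-path connected. (One subtlety: I should note that $z_A,z_B$ being nonzero cycles genuinely gives two linearly independent elements — they have disjoint supports, so this is immediate.)

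For part (3): fix an $r$-face $\bar F$ and consider $K' := K - \bar F$. We know $\beta_r(K')=0$, i.e. $\ker\p_r^{K'}=0$. Suppose $B_{r-1}(K')$ were disconnected, $S_r(K') = A\sqcup B$. The interesting point is that $F$ alone cannot "reconnect" these in $K$ unless adding $\bar F$ back creates a cycle — and it does, since $z$ is a cycle of $K$ with $c_{\bar F}\neq 0$. Concretely: in $K$ the support of $z$ is all of $S_r(K) = A \sqcup B \sqcup \{\bar F\}$, and $\p_r z = 0$. Restrict attention to the $(r-1)$-faces: any $(r-1)$-face $F'$ not in $\p\bar F$ lies entirely in $A$ or entirely in $B$ (by the $K'$-splitting), and the coefficient equation $\sum_{\bar G \supset F'} \pm c_{\bar G}=0$ involves only $r$-faces from one side; hence $z_A := \sum_{\bar G\in A} c_{\bar G}[\bar G]$ has $\p_r z_A$ supported on $\p\bar F$, and similarly for $z_B$, and $\p_r z_A + \p_r z_B + c_{\bar F}\p_r[\bar F] = 0$. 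If additionally $\p\bar F$ met, say, only the $A$-side among the two blocks (which happens precisely when $\bar F$ does not bridge $A$ and $B$), then $z_B$ would be a nonzero cycle in $K' $, contradiction; so $\p\bar F$ meets both $A$ and $B$, meaning $\bar F$ \emph{is} the bridge and $K'$ is in fact connected through — wait, that's the wrong direction. The cleaner argument: since $\ker\p_r^{K'}=0$, the only way $z$ (a cycle of $K$) can exist is that $\bar F$ is "essential"; removing it and then checking connectivity, I argue that if $K'$ split as $A\sqcup B$ with $\bar F$'s boundary faces meeting both sides, then $z$ restricted to $A\cup\{\bar F\}$ and to $B$ interact only through $\p\bar F$, and a dimension/rank count on the single linear relation $\p_r z_A + c_{\bar F}\p_r[\bar F] = -\p_r z_B$ forces both sides to vanish separately (their supports among $(r-1)$-faces of $A\setminus\p\bar F$ versus $B\setminus\p\bar F$ are disjoint, and the $\p\bar F$ part must also cancel, but $\p_r[\bar F]$ has all coefficients $\pm1$ and $c_{\bar F}\neq0$, which is the contradiction unless $\p\bar F$ straddles in a way that's impossible for a single simplex). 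I expect \textbf{part (3) to be the main obstacle}: pinning down exactly why a single $r$-face $\bar F$ cannot be the sole bridge between two components of $K-\bar F$ requires carefully tracking how $\p_r[\bar F]$ (whose coefficients are all $\pm 1$) cannot be cancelled by $\p_r z_A$ and $\p_r z_B$ on the face set $\p\bar F$ while respecting the component structure — and reconciling this with $\ker\p_r^{K'}=0$. The graph case $r=1$ is the familiar statement "an edge of a cycle is not a bridge", and the proof above is its homological generalization; I would double-check the $\pm$ signs do not cause an accidental cancellation by using that within $\p\bar F$ the incidence signs are those of the boundary of a single simplex, hence of rank exactly $1$ as a map to $C_{r-1}$ restricted to $\p\bar F$ composed appropriately.
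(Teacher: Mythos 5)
Your key observation $(\star)$ (every coefficient of the generating cycle $z$ is nonzero) is exactly the paper's starting point, and your proofs of parts (1) and (2) are correct and essentially identical to the paper's: (2) reads off the coefficient of $[F]$ in $\p_r z=0$, and (1) splits $z=z_A+z_B$ into two disjointly supported nonzero cycles. (The worry about $d_K(F)=0$ in (2) is moot: $K$ is pure of dimension $r$, so every $(r-1)$-face lies in some $r$-face.)

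Part (3), however, has a genuine gap, and you flag it yourself. Your setup is right: projecting $\p_r z=0$ onto the $(r-1)$-faces of each component of $K'=K-\bar F$ gives $\p_r z_A=-c_{\bar F}\,\p_r[\bar F]\big|_A$ and $\p_r z_B=-c_{\bar F}\,\p_r[\bar F]\big|_B$, and the observation that $z_B$ would be a nonzero cycle of $K'$ if $\p\bar F$ missed the $B$-side correctly shows $\p\bar F$ meets both components. But the claimed contradiction --- that ``the $\p\bar F$ part must also cancel'' --- does not follow: nothing prevents $\p_r z_A$ from being a nonzero chain equal to $-c_{\bar F}\,\p_r[\bar F]\big|_A$, so no rank count on that single relation finishes the proof. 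Two ways to close the gap. The paper's route is structural: set $\bar K_i=K_i'+\bar F$, show $\beta_r(\bar K_i)=0$ (a basic hole in $\bar K_1$ would contain $\bar F$, and then any face of $\p\bar F$ lying in the $K_2'$-component would have degree $1$ in it, contradicting part (2)), and then apply Mayer--Vietoris to $K=\bar K_1\cup\bar K_2$ with $\bar K_1\cap\bar K_2=\bar F$ acyclic, forcing $H_r(K)=0$. Alternatively, your relation can be pushed one step further: apply $\p_{r-1}$ to $\p_r z_A=-c_{\bar F}\,\p_r[\bar F]\big|_A$; the left side vanishes by $\p\p=0$, while a \emph{proper nonempty} partial sum of the boundary of a single $r$-simplex is never a cycle (any $(r-2)$-face indexed by a pair $i\in J$, $j\notin J$ survives with coefficient $\pm1$), so the right side is nonzero since $c_{\bar F}\neq0$. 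Either argument is needed; as written, part (3) is not proved.
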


\begin{proof}
(1) Let $\p_r: C_r(K,\R) \to C_{r-1}(K,\R)$ be the boundary map over $K$.
By definition, the kernel of $\p_r$ is given by $\ker \p_{r}=\{t \sigma:t\in\mathbb{R}\}$,
where $\sigma=\sum_{\bar{F}\in S_r(K)}\sigma_{\bar{F}}[\bar{F}]$.
By the minimal property of a basic hole,
\begin{equation}\label{coef}
\sigma_{\bar{F}}\ne 0, \mbox{~for~all~} \bar{F}\in S_r(K).
\end{equation}
If $K$ is not $(r-1)$-path connected, then $B_{r-1}(K)$ is not connected.
Therefore, there exist two subcomplexes $K_1$ and $K_2$ of $K$ such that $C_r(K,\R)=C_r(K_1,\R) \oplus C_r(K_2,\R)$ and $K_1,K_2$ share no common $(r-1)$-faces.
Write $\sigma=\sigma_1 + \sigma_2$, where $\sigma_i \in C_r(K_i,\R)$ for $i=1,2$.
Note that $\sigma_i \ne 0$ for $i=1,2$ by \eqref{coef}.
From the equality
$ \p \s =\p \s_1 + \p \s_2 =0$
and the fact that $K_1$ and $K_2$ share no common $(r-1)$-faces, we obtain
$$ \p \s_1 = \p \s_2 =0,$$
which contradicts the minimal property of a basic hole.

(2) Let $\s$ be defined as in (1).
If there exists an $F\in S_{r-1}(K)$ such that $d_K(F)=1$, i.e., there is a unique $\bar{F} \in S_r(K)$ such that $F \in \p \bar{F}$, then
\begin{equation}\label{mini} (\p_r \sigma) ([F]) = \sigma_{\bar{F}} \sgn ([F], \p [\bar{F}]) \ne 0,
\end{equation}
which leads to a contradiction, where $\sgn ([F], \p [\bar{F}])$ denotes the sign of $[F]$ in $\p [\bar{F}]$.

(3) Assume for the contrary that $K':=K-\bar{F}$ is not  $(r-1)$-path connected for some $r$-face $\bar{F}$ of $K$.
Then there exist two subcomplexes $K'_1, K'_2$ of $K'$ such that
$C_r(K',\R)=C_r(K'_1,\R) \oplus C_r(K'_2,\R)$
 and $K'_1,K'_2$ share no common $(r-1)$-faces.
As $K$ is $(r-1)$-path connected,
\begin{equation}\label{2-con}
\p \bar{F} = \p \bar{F} |_{K'_1} \cup \p \bar{F} |_{K'_2},
\end{equation}
where $\p \bar{F} |_{K'_i}:=\p \bar{F} \cap S_{r-1}(K'_i)$ for $i=1,2$,
and each term of the union is nonempty.

Let $\bar{K}_i=K'_i + \bar{F}$ for $i=1,2$, which is obtained from $K'_i$ by adding $\bar{F}$ as a new $r$-face.
By the definition of a basic hole, we have $\beta_r(K'_i)=0$ for $i=1,2$.
We assert that $\beta_r(\bar{K}_1)=0$.
Otherwise,
there would be a basic $r$-hole $O$ in $\bar{K}_1$, which necessarily
contains the face $\bar{F}$ as $\beta_r(K'_1)=0$.
By \eqref{2-con}, any $(r-1)$-face $F'$ in $\p \bar{F}|_{K'_2}$ has degree $1$ in $O$, which contradicts the result in (2).
A similar argument shows that $\beta_r(\bar{K}_2)=0$.
Thus, we have
$$ K= \bar{K}_1 \cup \bar{K}_2, \bar{F}= \bar{K}_1 \cap \bar{K}_2.$$
It is known that $\beta_r(\bar{F})=\beta_{r-1}(\bar{F})=0$ when $r \ge 2$.
So, by the Mayer-Vietoris sequence (see \cite[\S 25]{Munk})
$$ H_r(K) \cong H_r(\bar{K}_1) \oplus H_r(\bar{K}_2).$$
As $\beta_r(\bar{K}_i)=0$ for $i=1,2$,
we conclude that $\beta_r(K)=0$, which is a contradiction.
The result follows.
\end{proof}

\begin{proof}[\bf Proof of Lemma \ref{pathcon}]
If $K$ itself contains all possible $(r-1)$-faces and is $(r-1)$-path connected, we take $L=K$.
Otherwise,
let $\bar{K}$ be the complex obtained from $K$ by adding all possible $(r-1)$-faces.
In this case, $\bar{K}$ is not $(r-1)$-path connected.
Consider the two incidence graphs $B_{r-1}(\bar{K})$ and $B_{r-2}(\bar{K})$.
As $\bar{K}$ contains all possible $(r-1)$-faces, $B_{r-2}(\bar{K})$ is connected.
Suppose $\bar{K}_1$ is an $(r-1)$-path connected component of $\bar{K}$.
Since $B_{r-2}(\bar{K})$ is connected, there must exist an $(r-1)$-face $F_1$ of $\bar{K}_1$, and an $(r-1)$-face $F_2$ of another $(r-1)$-path connected component of $\bar{K}$, say $\bar{K}_2$, such that $F_1 \cap F_2$ is an $(r-2)$-faces of $\bar{K}$.
In other words, there exists an $(r-2)$-face $E$ such that $\{E, F_1\}$ and $\{E, F_2\}$ are both edges of $B_{r-2}(\bar{K})$, or equivalently, $E \in \p F_1$ and $E \in \p F_2$.
Now adding a new $r$-face $\bar{F}:=F_1 \cup F_2$ to $\bar{K}$, we get
a new complex $\bar{K}+ \bar{F}$ such that $\bar{K}_1$ is connected to $\bar{K}_2$ in this new complex.

We assert that $\bar{K}+ \bar{F}$ contains no new $r$-holes except those of $K$.
Otherwise, there exists a basic $r$-hole $O$ which necessarily contains the face $\bar{F}=F_1 \cup F_2$.
By Lemma \ref{hole}, $O$ is $(r-1)$-path connected.
Hence, $O$ must be contained in an $(r-1)$-path connected component of $\bar{K}+\bar{F}$, which contains both $\bar{K}_1$ and $\bar{K}_2$.
Furthermore, by Lemma \ref{hole}, $O-\bar{F}$ is also $(r-1)$-path connected.
In particular, $F_1$ is connected to $F_2$ by an $(r-1)$-path in $O-\bar{F}$.
However,  $F_1 \in \bar{K}_1$ and $F_2 \in \bar{K}_2$ are disconnected  in $\bar{K}=(\bar{K}+\bar{F})-\bar{F}$, leading to a contradiction.

We can repeat this process, adding new $r$-faces until the resulting complex is $(r-1)$-path connected,
The final complex, $L$, will have the desired properties as stated in the lemma.
\end{proof}

\begin{proof}[\bf Proof of Corollary \ref{s-con}]
Assume to contrary, $K$ does not contain all possible $(r-1)$-faces or $K$ is not $(r-1)$-path connected.
By Lemma \ref{pathcon}, there exists a complex $L$ containing $K$ as a proper subcomplex, such that $L$ contains all possible $(r-1)$-faces and is $(r-1)$-path connected.
Furthermore, we have $\beta_r(K)=\beta_r(L)$.
Surely, $Q_{r-1}^{\up}(L)$ is nonnegative and irreducible.
Note that $Q_{r-1}^{\up}(K)$ is either a proper principal submatrix of $Q_{r-1}^{\up}(L)$ (if $K$ does not contain all possible $(r-1)$-faces) or  $Q_{r-1}^{\up}(K) \lneqq Q_{r-1}^{\up}(L)$ (if $K$ contains all possible $(r-1)$-faces but is not $(r-1)$-path connected).
By Perron-Frobenius Theorem, this implies that $\q_{r-1}(K) < \q_{r-1}(L)$, which is  a contradiction.
\end{proof}

\subsection{$r$-neighbor uniformity}\label{sec3-2}
In this section We give some structural properties of $r$-neighbor uniform $r$-complexes.

\begin{lemma}\label{conne}
If $K$ is $r$-neighbor uniform $r$-complex, then it is $r$-path connected.
\end{lemma}

\begin{proof}
Let $F_0$ be an $r$-face of $K$.
Let $K'$ be an $r$-path connected component of $K$ that contains the face $F_0$.
Assume to the contrary that $K$ is not $r$-path connected.
Then there exists an $r$-face $G \notin K'$.
Let $F$ be an $r$-face of $K'$ that maximizes $|F \cap G|$.
As $G \notin K'$, $G$ is not an down neighbor of $F$, implying that $|F \cap G| \le r-1$.
Pick a vertex $u \in G \setminus F$.
By the $r$-uniform condition, $N^d(F,u)|=r$.
So we have a vertex $v \in F\setminus G$ such that $F_v:=(F \setminus \{v\}) \cup \{u\} \in N^\dn(F,u)$.
However,
$$ |F_v \cap G| = |F \cap G|+1,$$
contradicting the maximality of $|F \cap G|$.
So, $K$ is $r$-path connected.
\end{proof}

We give an example of $r$-neighbor uniform $r$-complex, namely the canonical Alexander dual of a perfect matching on $r+3$ vertices.
Let $V$ be a set with $(r+3)$ vertices, where $r$ is odd.
Let $M$ be a perfect matching of $V$, namely, a partition of $V$ with each part having $2$ elements.
The \emph{canonical Alexander dual} $M^\vee$ of $M$ (simply called the \emph{dual of $M$}) is defined by
$$ M^\vee := \{V \setminus e: e \in \binom{V}{2} \setminus M\}.$$
By definition, $M^\vee$ is a pure $r$-dimensional complex whose $r$-faces are the complements of $2$-subsets of $V$ except those of $M$,
or equivalently,
$$ S_r(M^\vee)=\binom{V}{r+1} \setminus \{V \setminus e: e \in M\}.$$

Denote by $\Delta_V^2$ the complete graph on the vertex set $V$.
Then $M$ is a perfect matching of $\Delta_V^2$, namely, a set of pairwise disjoint edges that covering all vertices.
By definition, $F \in S_r(M^\vee)$ if and only if $F = V \setminus e$ for some edge $e \notin M$.
Write $F_e:= V \setminus e$ for an edge $e \notin M$.
Then, two faces $F_e$ and $F_{e'}$ are down neighbors if and only if $|e \cap e'|=1$, namely, $e$ intersects $e'$ into exactly one vertex.

An $r$-dimensional \emph{double pyramid}, denoted by $\Diamond_{r+3}^{r+1}$, is an $r$-dimensional complex on $r+3$ vertices with the facet set:
\[\left\{\{i\} \cup F:  i\in \{r+2,r+3\}, F\in \binom{[r+1]}{r}\right\}.\]
Note that $\Diamond_{r+3}^{r+1}$ is a suspension of the boundary of an $r$-simplex; see Fig. \ref{TR} for the double pyramid $\Diamond_5^3$ for illustration.
The polyhedron of the boundary of an $r$-simplex is homeomorphic to $S^{r-1}$,  and consequently, the polyhedron of $\Diamond_{r+3}^{r+1}$ is homeomorphic to $S^r$.

\begin{figure}[htbp]
\begin{center}
\includegraphics[scale=1.2]{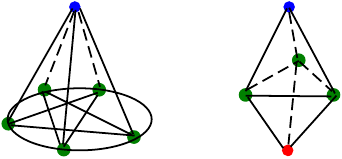}
\end{center}
\caption{\small Tented complex $\T_6^3$ (left) and rhombic complex $\Diamond_5^3$ (right)}\label{TR}
\end{figure}

\begin{lemma}\label{dualM}
Let $V$ be a vertex set of size $r+3$, where $r$ is odd.
Let $M$ be a perfect matching of $V$, and let $M^\vee$ be the canonical Alexander dual of $M$.
Then $M^\vee$ is $r$-neighbor uniform, and contains $\frac{r+3}{2}$ copies of double pyramids $\Diamond_{r+3}^{r+1}$.
\end{lemma}	

\begin{proof}
Let $F_e:=V \setminus e$ be an $r$-face of $M^\vee$.
Then $e = V \setminus F \notin M$.
Suppose that $e =\{x,y\}$.
Let $F_{e'} \in N^{\dn}(F,x)$.
Then $e'$ intersects $e$ into exactly one vertex, $e'$ contains no the vertex $x$.
So, $e'$ contains the vertex $y$, which implies that
 $$N^{\dn}(F,x) = \{F_{e'}: e' \notin M, y \in e', x \notin e'\}.$$
The number of edges $e'$ in $\Delta_V^2$ incident with $y$, except the edge of $M$ incident with $y$ and the edge $e$, is exactly $r+2-2=r$.
So, $|N^{\dn}(F,x)| = r$.
Similarly $|N^{\dn}(F,y)| = r$.
Thus $M^\vee$ is $r$-neighbor uniform.

For each edge $e=\{x,y\} \in M$, let $R_e=V \setminus e$.
By definition, $R_e$ is not a facet of $M^\vee$.
For any $r$-subset of $G \subset R_e$, letting $z \in R_e \setminus G$,   thus $F_{\{y,z\}}=\{x\} \cup G$ and $F_{\{x,z\}}=\{y\} \cup G$ are both facets of $M^\vee$.
So, we have a double pyramid $\Diamond_{r+3}^{r+1}$ with two apexes $x,y$.
As $M$ contains $\frac{r+3}{2}$ edges, $M^\vee$ contains $\frac{r+3}{2}$ copies of double pyramids $\Diamond_{r+3}^{r+1}$.
\end{proof}

\begin{lemma}\label{TM}
Let $K$ be a pure $r$-dimension complex with vertex set $V$ of size $r+3$, which is $r$-neighbor uniform.
Then $K\cong \wedge_{r+3}^{r+1}$, or $K \cong M^\vee$ for some perfect matching $M$ of $V$ only in the case of odd $r$.
\end{lemma}	

\begin{proof}
Suppose that the vertex set  $V=\{1,\ldots,r+2,r+3\}$.
Note that for any $r$-face $F \in K$, $V \setminus F$ is a $2$-subset of $V$.
Denote $F_{\{i,j\}}:=V\setminus \{i, j\}$ for $i,j \in V$ with $i \ne j$.
So, any face of $K$ can be represented as $F_e$ for some $2$-subset $e$ of $V$.
Let $G$ be a graph on $V$ whose edges are those $e$ such that $F_e \in S_r(K)$.
Let $F_e$ be an $r$-face of $K$, where $e=\{u,v\} \in E(G)$.
Recall that
$$N^{\dn}(F_e,v)=\{F_{e'} \in S_r(K): e' \in E(G), |e' \cap e|=1, v \notin e'\}.$$
So,  $|N^{\dn}(F_e,v)|$ is the number of edges $e'$ of $G$ incident with $u$ except the edge $e$.
By the condition, $|N^{\dn}(F_e,v)|=r$, so $u$ has degree $r+1$ in $G$, including the contribution of the edge $e$.
Similarly, by considering $N^{\dn}(F_e,u)$, $v$ has degree $r+1$ in $G$.
So, each endpoint of an edge of $G$ has degree $r+1$.

Without loss of generality, let $F_0=F_{\{r+2,r+3\}}$ be a face of $K$.
Then $\{r+2,r+3\}$ is an edge of $G$.
Let $G'$ be a connected component of $G$ that contains the vertices $r+1$ and $r+2$.
By the above discussion, the vertices $r+1$ and $r+2$ both have degree $r+1$ in $G$.
So, $G'$ has at least $r+2$ vertices.

{\bf Case 1.} $G' \ne G$.
As $G$ has $r+3$ vertices, $G'$ has exactly $r+2$ vertices, and $G$ is a disjoint union of $G'$ and an isolated vertex, say the vertex $1$.
By the previous discussion, $G'$ is $(r+1)$-regular.
So $G'$ is a complete graph with vertex set $[r+1]$.
This implies that
$$S_r(K)=\left\{F_e: e \in \binom{[r+1]}{2}\right\},$$
and hence each $r$-face of $K$ contains the vertex $1$.
So, $K \cong \wedge_{r+3}^r$.

{\bf Case 2.} $G'=G$.
In this case, $G$ is a connected $(r+1)$-regular graph.
As $G$ has $r+3$ vertices, each vertex of $G$ is incident with a missed edge.
Furthermore, any two missed edges are disjoint; otherwise, the common endpoint of the two missed edges would have degree less than $r+1$.
So, those missed edges form a perfect matching $M$ of $V$, implying that $r$ is odd
and $G=\Delta_V^2-M$.
So,  $K \cong M^\vee$.
\end{proof}

Next we prove that the an $r$-neighbor uniform $r$-complex $K$ can only be a cone complex if it has a large number of vertices.
We begin the discussion for $r=3$.

\begin{lemma}\label{Case3}
Let $K$ be a pure $3$-dimensional simplicial complex on $n \ge 9$ vertices, which is $3$-neighbor uniform.
 Then $ K\cong \wedge^3_n$.
\end{lemma}

\begin{proof}
Let $V$ be the vertex set of $K$.
The condition that $K$ is $3$-neighbor uniform is equivalent to the following local  condition.

\begin{claim}[$\{0,4\}$-condition] For every $5$-subset $X\subseteq V$, the number $3$-faces of $K[X]$ is either $0$ or $4$.
\end{claim}

Indeed, if $X$ contains a $3$-face $F$, then $X=F\cup\{u\}$ for the
unique vertex $u\in X\setminus F$.
In $K[X]$, there are $3$ facets in $N^\dn(F,u)$ containing the vertex $u$, and $1$ facet  without $u$ (namely the face $F$).
Thus $K[X]$ contains exactly $1+3=4$ facets.
If $X$ contains no facet, the number of facets in $K[X]$ is $0$.

For a facet $F$ of $K$, we define a coloring $c$ on $V\setminus F$ with respect to $F$ as follows.
For each vertex $x\in V\setminus F$, the $5$-set $F \cup\{x\}$ contains
exactly $4$ facets. Since $F$ itself is a facet, among the four
replacement faces
$(F\setminus\{u\})\cup\{x\},~ u \in F$,
exactly one is not a facet.
Hence, we may define a color $c(x)\in F$ by requiring that
\[
(F\setminus\{c(x)\})\cup\{x\}\notin K,
\]
while the other three replacement faces belong to $K$.

\begin{claim}\label{3ver}
For a facet $F$ of $K$ and any three distinct vertices $x,y,z\in V\setminus F$,
 the colors of $x,y,z$ are either all equal or pairwise distinct.
\end{claim}

Let $F=:\{a,b,c,d\}$.
Suppose to the contrary, without loss of generality, that
\[
c(x)=c(y)=a,~ c(z)=b, ~ a\neq b.
\]
By Lemma \ref{TM}, $K[F\cup \{x,y\}]$ is a cone complex with the apex $a$.
So, $\{a,b,x,y\} \in K$.
Also by Lemma \ref{TM}, $K[F\cup \{x,z\}]$ is a dual of a matching $M$ of $F\cup \{x,z\}$,
where $M=\{\{a,z\},\{b,x\},\{c,d\}\}$.
So, $\{a,b,x,z\}\notin K$.
Similarly, by considering the complex $K[F\cup \{y,z\}]$, we have
$\{a,b,y,z\}\notin K$.
Now consider the $5$-set
$$ X=\{a,b,x,y,z\}.$$
Thus $K[X]$ contains at least $1$ facet and at least $2$ missed facets, contradicting the $\{0,4\}$-condition. Hence the claim holds.
Since $V\setminus F$ at least $5$ vertices but there are only $4$ possible
colors from $F$, two of them must have the same color.
By Claim \ref{3ver}, all vertices in $V\setminus F$ must have the same color.

Without loss of generality, let $F_0=\{1,2,3,4\}$ be a facet of $K$, and
assume that this common color with respect to $F_0$ is $1$.
We now prove $K$ is a cone with apex $1$.
As the common color with respect to $F_0$ is $1$, every down neighbor of $F_0$
contains the vertex $1$.
Let $F$ be a down-neighbor of $F_0$. Then $F$ contains $1$.
We claim that the common color relative to $F$ is also $1$.
Write $F=(F_0\setminus\{j\})\cup\{x\}$ for some $j\neq 1$ and $x \notin F_0$.
Take arbitrarily a vertex $z \in V\setminus(F_0\cup\{x\})$,
Since the common color relative to $F_0$ is $1$, the induced complex on $F_0\cup\{x,z\}$ is a cone with apex $1$ by Lemma \ref{TM}.
Hence the $4$-set $(F\setminus\{1\})\cup\{z\}$ is not a facet, because it does not contain $1$.
So, the color of $z$ relative to $F$ is $1$, and hence the common color relative to $F$ is $1$.

As $K$ is $r$-path connected by Lemma \ref{conne}, each facet $F$ of $K$ can be connected to $F_0$ by an $r$-path of facets.
So, repeating the above argument along the $r$-path, we have the common color relative to  $F$ is $1$.
We conclude that every facet of $K$ contains the vertex $1$.
Hence $K$ is a cone with apex $1$.

Finally, we prove $K$ is a complete cone over $1$, namely, any $3$-subset $T$ of $V \setminus \{1\}$ is a base of the cone.
We will use the operation called \emph{single-vertex replacement}.
Let $F=\{1\}\cup B$ be a facet, and let $v \in F$.
For any $u\in V\setminus F$, replacing the vertex $v$ of $F$ by $u$ will give a new $(r+1)$-set:
$$ F_v:=(F \setminus \{v\}) \cup \{u\}.$$
Surely, $F_1$ is not a facet, since every facet of $K$ contains $1$.
Therefore the $\{0,4\}$-condition forces all other replacements
$F_b$ for all $b \in B$ to be facets.
Thus the collection of bases $B$ of $K$ is closed under all single-vertex replacements.
Since any $3$-subset $T$ of $V\setminus\{1\}$ can be obtained from $B$ by a sequence of single-vertex replacement, $T$ is a base.
Consequently, $K\cong \wedge_n^3$.
\end{proof}

We now prove a general result, namely Theorem \ref{NeiU}, for $r$-neighbor uniform $r$-complexes by a similar discussion as Lemma \ref{Case3}.

\begin{proof}[\bf Proof of Theorem \ref{NeiU}]
The result for $r=3$ is proved in Lemma \ref{Case3}.
We assume that $r \ge 5$ in the following discussion.
Let $V=[n]$ be the vertex set of $K$.
The condition that $K$ is $r$-neighbor uniform is equivalent to the following local \emph{$\{0, r+1\}$-condition}:
every $(r+2)$-subset $X\subseteq V$ contains either $0$ or $r+1$ facets of $K$.
Indeed, if $X$ contains a facet $F$, then $X=F\cup\{u\}$ for the unique
vertex $u\in X\setminus F$.
There are $r$ facets in $N^\dn(F,u)$ containing the vertex $u$, and $1$ facet in $X$ without $u$ (i.e. the face $F$).
 Hence $X$ contains exactly $r+1$ facets.

Fix a facet $F_0=\{1,2,\dots,r+1\}\in S_r(K)$.
For every outside vertex $x\in V\setminus F_0$, define its color
$c(x)\in F_0$ by
\[
(F_0\setminus\{c(x)\})\cup\{x\}\notin S_r(K).
\]
This is well-defined because $F_0\cup\{x\}$ contains exactly one missed facet.
Since
$|V\setminus F_0|\geq (r-2)!!(r+1)+1$,
the pigeonhole principle gives a color class of size at least $(r-2)!!+1$.
Relabeling if necessary, choose distinct vertices
$x_1,\dots,x_{(r-2)!!+1}\in V\setminus F_0$ such that
$c(x_i)=1$ for all $i \in [(r-2)!!+1]$.

We first show that all vertices in $V\setminus F_0$ have the same color.
Suppose to the contrary that there exists vertices outside $F_0$ with different colors.  Choose another outside vertex $y$ with $c(y)=2$.
For each $i\in [(r-2)!!+1]$, set $V_i=F_0\cup\{y,x_i\}$.
By Lemma \ref{TM}, the induced complex $K[V_i]$ is a dual of a perfect matching $M_i$ on $V_i$, where
\[
M_i=\{\{1,y\},\{2,x_i\}\}\cup M_i',
\]
and $M_i'$ is a perfect matching on $F_0\setminus\{1,2\}$.

Here, we note this case only occurs when $r$ is odd; otherwise, $F_0\setminus\{1,2\}$ would not contain perfect matchings, a contradiction.
There are exactly $(r-2)!!$ perfect matchings on $F_0\setminus\{1,2\}$, while we have
$ (r-2)!!+1 $ vertices $x_i$.
Hence, by the pigeonhole principle, two of these induced
matchings have the same restriction on $F_0\setminus\{1,2\}$.
Relabeling, assume $ M_1' = M_2'=:M'$.
As $r\geq 5$, $M'$ contains at least two distinct edges, say $A=\{a,a'\}$ and $B=\{b,b'\}$.

We now derive a contradiction from the local $\{0,r+1\}$-condition.
First consider an $(r+2)$-set:
$$W_{aa'}=(F_0\setminus \{a,a'\})\cup\{y,x_1,x_2\}.$$
Since $A=\{a,a'\}\in M'$, both $V_1\setminus A$ and $V_2\setminus A$
are missed facets of $K$, which are both contained in $W_{aa'}$.
Hence $W_{aa'}$ contains at least two missed facets.
By the local $\{0,r+1\}$-condition, $W_{aa'}$ cannot contain any facet at all.
Therefore every $(r+1)$-subset of $W_{aa'}$ is a missed facet.

Next consider another $(r+2)$-set:
\[
W_{a'b'}=(F_0 \setminus \{a',b'\})\cup \{y,x_1,x_2\}.
\]
Noting that $W_{a'b'}\setminus\{a\}=W_{aa'} \setminus \{b'\}$, which is a $(r+1)$-subset of $W_{aa'}$,  it is a missed facet of $W_{a'b'}$ by the previous discussion.
On the other hand, for $i=1,2$, the set
$(F_0 \setminus \{a',b'\})\cup\{y,x_i\}$ is a facet of $K[V_i]$, because its complementary edge $\{a',b'\}$ in $V_i$ is not an edge of the matching $M_i$.
Thus $W_{aa'}$ contains at least two facets.
By the local $\{0,r+1\}$-condition, $W_{a'b'}$ contains exactly one non-facet, namely
$W_{a'b'}\setminus\{a\}$.
Hence $W_{a'b'} \setminus\{b\}$ is a facet of $K$.
Similarly, by considering the set:
$$ W_{ab}=(F_0 \setminus \{a,b\})\cup \{y,x_1,x_2\},$$
we have $W_{ab} \setminus\{b'\}$ is a facet of $K$.

Finally consider the $(r+2)$-set:
\[
W_{bb'}=(F_0 \setminus \{b,b'\})\cup \{y,x_1,x_2\}.
\]
Since $B=\{b,b'\}\in M'$, the two $(r+1)$-subsets $V_1\setminus B$ and $V_2\setminus B$
are both non-facets of $W_{bb'}$.
Observe that
$W_{bb'} \setminus \{a'\}=W_{a'b'} \setminus \{b\}$ and
$ W_{bb'} \setminus \{a\}=W_{ab} \setminus\{b'\}$, which are both facets of $K$ by the previous discussion.
Therefore, $W_{bb'}$ contains at least two facets and at least
two missed facets; contradicting the local $\{0, r+1\}$ condition.
Thus the coloring on $V\setminus F_0$ must be constant.
Relabeling, we may assume the common color is $1$.

We now prove that $K$ is a complete cone with apex $1$. The above argument applies
to every facet, not only to $F_0$. Hence, for every facet $F$, all outside
vertices have one common color relative to $F$.
By a similar discussion as in the proof of Lemma \ref{Case3}, the common color relative to every facet $F$ is $1$, implying that $K$ is a cone over $1$.
By using single-vertex replacements as in proof of Lemma \ref{Case3},
any $r$-subset $B$ of $V\setminus\{1\}$ is a base of the cone $K$.
Therefore, $K\cong \wedge_n^{r+1}$.
\end{proof}

\begin{rmk}\label{oddRn} \rm
In Lemma \ref{Case3} and Theorem \ref{NeiU}, if we have no requirement for $n$ when $r$ is odd, from their proof we know that either $K \cong \T_n^{r+1}$ or $K$ contains a subcomplex $M^\vee$ for some perfect matching $M$ on $r+3$ vertices, where the former corresponds the case that the coloring relative every facet is constant, and the latter corresponds the case the coloring relative some facet is not constant.
By Lemma \ref{dualM}, $M^\vee$, and hence $K$, contains a double pyramid $\Diamond_{r+3}^{r+1}$ if $K \not\cong \T_n^{r+1}$.
\end{rmk}

\subsection{Spectral radius}\label{sec3-3}
In this section, we first introduce two special complexes: tented complexes and double pyramid complexes.
We will give an upper bound for the signless Laplacian spectral radius of  a pure $r$-dimensional complex $K$ without $\D_{r+2}^{r+1}$,
and show that if a complex $K$ achieves the upper bound, then $K$ is either a tented complex or contains a rhombic complex.
By this result, we get upper bounds for the Tur\'an numbers of $\D_{r+1}^r$, and for the spectral radius of a complex without a homeomorph of $S^r$ or a $r$-hole.
Finally, we provide an upper bound for the spectral radius of a complex with a prescribed Betti number.

\begin{lemma}\label{rNeiSpe}
Let $K$ be a pure $r$-complex on $n$ vertices, which is $r$-neighbor uniform.
Then
\[\q_{r-1}(K)=rn-r^2+1.\]
\end{lemma}

\begin{proof}
It suffices to prove $Q_r^{\down}(K)$ has the spectral radius $rn-r^2+1$, as $Q_{r-1}^{\up}$ shares the same nonzero eigenvalues with $Q_r^{\down}(K)$.
Let $\mathbf{1}$ be an all-one vector defined on $S_r(K)$.
Since $K$ is $r$-neighbor uniform, for each facet $F \in S_r(K)$ and each vertex $u \in V(K) \setminus F$, $|N^{\dn}(F,u)|=r$, which implies that $F$ has $r(n-r-1)$ down neighbors.
So, by \eqref{Qevdown},
$$ (Q_r^{\down}(K) \mathbf{1})(F) =|F|\cdot 1+ \sum_{F'\in N^{\dn}(F)} 1 =r+1+ r(n-r-1)=rn-r^2+1.$$
Thus, $Q_r^{\down}(K) \mathbf{1}=(rn-r^2+1) \mathbf{1}$.
By Perron-Frobenius theorem for nonnegative matrices, $Q_r^{\down}(K)$, and hence $Q_{r-1}^{\up}$ has the spectral radius $rn-r^2+1$.
\end{proof}

As the tented complex $\T_n^{r+1}$ and the double pyramid $\Diamond_{r+3}^{r+1}$ are both $r$-neighbor uniform, by Lemma \ref{rNeiSpe}, we immediately get the following result.

\begin{coro}\label{tent}
\[\q_{r-1}(\T_n^{r+1})=rn-r^2+1, ~ \q_{r-1}(\Diamond_{r+3}^{r+1})=3r+1.\]
\end{coro}

\begin{proof}[\bf Proof of Theorem \ref{main0}]
Suppose that $K_0$ is a pure $r$-dimensional complex on $n$ vertices, which does not contain $\D_{r+2}^{r+1}$, and that its spectral radius $\q_{r-1}(K_0)$ attains the maximum.
By Corollary \ref{s-con}, $K_0$ contains all possible $(r-1)$-faces and is $(r-1)$-path connected.

Let $f$ be the Perron vector of $Q_{r-1}^{\up}(K_0)$ associated with the spectral radius $\q_{r-1}(K_0)$.
Recall that for a face $F \in S_r(K_0)$, we define $f(\p F):=\sum_{G \in \p F} f(G)$.
By normalization, let $F_0 \in S_r(K_0)$ be such that
\[f(\p F_0)=\max\left \{ f(\p F):F\in S_r(K_0)\right \}=1.\]
Since $K_0$ does not contain $\D_{r+2}^{r+1}$, for any $u \in V(K) \setminus F_0$, we have
\begin{equation}\label{localu}
|N^{\dn}(F_0,u)|\le r,
\end{equation}
 and hence
 \begin{equation}\label{local}
 |N^\dn(F_0)| =\sum_{u \in V(K) \setminus F_0} |N^{\dn}(F_0,u)|\le r(n-r-1).
 \end{equation}
By \eqref{Q-equ-F} and the above inequality, we get
\begin{equation}\label{ev2}
\begin{aligned}
q_{r-1}(K_0)f(\p F_0)&=|F_0|f(\p F_0)+\sum_{F \in N^d(F_0)}f(\p F)\\
&\le r+1+|N^{d}(F_0)|\\
&\le r+1+r(n-r-1)=rn-r^2+1.
\end{aligned}
\end{equation}

Clearly, $\T_n^{r+1}$ contains no $\D_{r+2}^{r+1}$, and by Corollary \ref{tent},
  $$\q_{r-1}(K_0)\ge q_{r-1}(\T_n^{r+1})=rn-r^2+1.$$
Thus, all inequalities in \eqref{ev2} hold as equalities,
which implies that
for each $u \in V(K) \setminus F_0$, $|N^\dn(F_0,u)|=r$, and for each $F \in N^\dn(F_0)$, $f(\p F)=1$.
Repeating this argument for each $F \in N^\dn(F_0)$ and its down neighbors, and using the fact that $K_0$ is $(r-1)$-path connected, we obtain
$|N^\dn(F,u)|=r$ for each $F \in S_r(K_0)$ and each $u \in V(K_0)\setminus F$, namely, $K_0$ is $r$-neighbor uniform.

Conversely, if $K$ is $r$-neighbor uniform, then, by Lemma \ref{rNeiSpe}, $q_{r-1}(K)=rn-r^2+1$.
The sufficiency follows.
\end{proof}

\begin{proof}[\bf Proof of Theorem \ref{main}]
By Theorem \ref{main0}, it suffices to characterize the complex $K$ satisfying the equality in \eqref{Eq_beta0}.
If $r=1$, $K$ is a simple graph, and $\D_{3}^2$ is a triangle.
Taking an edge $e=\{u,v\}$ of $K$, by $1$-neighbor uniformity,
each vertex of $K$ other than $u,v$ is connected to $u$ or $v$, but not both.
Therefore, $N(u)$ and $N(v)$ form a bipartition of $V(K_0)$, where $N(w)$ denotes the set of neighbors of a vertex $w$ in $K$.
As $K$ is triangle-free, there are no edges within $N(u)$ or $N(v)$, which implies that $K_0$ is a bipartite graph.
By the maximality of the signless Laplacian radius, $K$ must be a complete bipartite graph.

Now suppose $r \ge 2$ in the following.
By Theorem \ref{main0},  $K$ is $r$-neighbor uniform, and hence by Theorem \ref{NeiU}, we get the desired result immediately.
\end{proof}

\begin{rmk}\label{Dconterexample}
\rm
We give a remark for Theorem \ref{NeiU} and Theorem \ref{main}.
If $r$ is odd and $n < (r-2)!!(r+1)+r+2$, the complex $\T_n^{r+1}$ may not be the unique $r$-complex to be $r$-neighbor uniform or to satisfy the equality in Eq. \eqref{Eq_beta0}.
Here, we present all examples for $r=3$ and $ 4 \le n \le 8$.

For $n=4$, there is only one pure $3$-dimensional complexes of on $4$ vertices, namely the $3$-simplex together with its boundary, which trivially satisfies the equality in Eq. \eqref{Eq_beta0}.

For $n=5$, the complex $\T_5^{4}$ is the unique one achieving the equality in Eq. \eqref{Eq_beta0}.

For $n=6$, by Lemma \ref{TM}, there are exactly $2$ non-isomorphic pure $3$-complexes on $6$ vertices that attain equality in Eq. \eqref{Eq_beta0}:  $\T_6^4$, and the dual of a perfect matching $M$. For example, letting $M=\{\{1,6\},\{2,5\},\{3,4\}\}$, then
$$S_3(M^\vee)=\binom{[6]}{4}\setminus \left\{1256,
2345,1346\right\}.$$
Here and in the below, we simply write a $4$-set $\{a,b,c,d\}$ as $abcd$.

It is straightforward to verify that $M^\vee$ contains $2$ independent $3$-holes both isomorphic to $\Diamond_{r+3}^{r+1}$ and $\beta_3(K_1)=2$.
While $\T_6^4$ and $M^\vee$ share the same spectral radius $10$, they differ in the number of facets:  $|S_3(\T_6^4)|=10$ and $|S_3(M^\vee)|=12$.
By Lemma \ref{connec} and Theorem \ref{main0} (or Corollary \ref{HypTur} below),
$$\ex(6,\D_{5}^4) \le \frac{25}{2},$$
which implies that $\ex(6,\D_{5}^4) =12$ because $|S_3(M^\vee)|=12$.

For $n=7$, there are exactly $2$ non-isomorphic pure $3$-complexes on $7$ vertices that attain equality in Eq. \eqref{Eq_beta0}: $\T_7^4$, and the complex $K_1$ with facet set:
$$\binom{[7]}{4}\setminus \left\{1256,
2345,1346, 1247, 1357, 2367, 4567\right\}.$$

The complex $K_1$ contains $8$ independent $3$-holes all isomorphic to $\Diamond_{r+3}^{r+1}$ and $\beta_3(K_2)=8$.
Also, $\T_7^4$ and $K_1$ have same spectral radius $13$ but differ in number of facets:  $S_3(\T_7^4)=20$ and $S_3(K_1)=28$.
Similarly, $$ \ex(7,\D_{5}^4) \le \frac{455}{16},$$
which implies that $\ex(7,\D_{5}^4) =28$ because $|S_3(K_1)|=28$.

For $n=8$, except $\T_8^4$, up to isomorphism, the following complex $K_2$ is the unique one satisfying \eqref{Eq_beta0} whose facets are $\displaystyle\binom{[8]}{4}\setminus \mathcal{B}$,
where $\mathcal{B}$ is the block set of a Steiner quadruple system $S(3,4,8)$ of size $14$, listed as follows:
 \[
\mathcal B=\{
1234,1256,1278,1357,1368,1458,1467,
2358,2367,2457,2468,3456,3478,5678
\}.
\]

The complex $K_2$ contains $21$ independent $3$-holes all isomorphic to $\Diamond_{r+3}^{r+1}$ and $\beta_3(K_2)=21$.
Also, $\T_8^4$ and $K_2$ have same spectral radius $16$ but differ in number of facets:  $S_3(\T_8^4)=35$ and $S_3(K_2)=56$.
Similarly, $$ \ex(8,\D_{5}^4) \le 56,$$
which implies that $\ex(7,\D_{5}^4) =56$ because $|S_3(K_2)|=56$.
\end{rmk}

By Lemma \ref{connec} and Theorem \ref{main0}, we obtain an upper bound for the hypergraph Tur\'an number $\ex(n,\D_{r+1}^r)$.

\begin{coro}\label{HypTur}
$$ \textup{ex}(n,\D_{r+1}^r) \le \frac{(r-1)(n-r+1)+1}{r(n-r+1)}\binom{n}{r}.$$
\end{coro}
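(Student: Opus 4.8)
The plan is to combine Lemma~\ref{connec} with the spectral bound of Theorem~\ref{beta0}, via the topological observation already noted in the introduction: the polyhedron of $\D_{r+1}^r$ is homeomorphic to $S^{r-1}$, so a complex whose $(r-1)$-th Betti number vanishes cannot contain $\D_{r+1}^r$. Concretely, let $H$ be a $\D_{r+1}^r$-free $r$-graph on $n$ vertices with $\ex(n,\D_{r+1}^r)$ edges. View $H$ as a pure $(r-1)$-dimensional complex $K$ by taking all subsets of edges as faces; then $|S_{r-1}(K)|=e(H)$, and $K$ contains all possible $(r-2)$-faces (this needs a small remark: we may discard isolated-type vertices, or simply note adding the missing $(r-2)$-faces changes nothing since $K$ already contains all $r$-subsets' sub-faces of smaller dimension — in fact every $(r-2)$-subset of an edge is present, and one can add the rest freely without creating a copy of $\D_{r+1}^r$, which is a statement about $(r-1)$-faces).

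Next I would apply Theorem~\ref{beta0} with the dimension parameter shifted: set $r' = r-1$, so $K$ is pure of dimension $r'=r-1$ on $n$ vertices. Since $H$ is $\D_{r+1}^r$-free, $K$ does not contain $\D_{r+1}^r = \D_{r'+2}^{r'+1}$ as a subcomplex, so the hypothesis of Theorem~\ref{beta0} is satisfied and
\[
\q_{r'-1}(K) = \q_{r-2}(K) \le r'n - r'^2 + 1 = (r-1)n - (r-1)^2 + 1 = (r-1)(n-r+1)+1.
\]
Then Lemma~\ref{connec}, applied with index $i = r'-1 = r-2$ (so that $i+1 = r-1$ and $K$ is pure $(i+1)$-dimensional containing all $i$-faces), gives
\[
|S_{r-1}(K)| \le \frac{\q_{r-2}(K)\binom{n}{r-1}}{(r)^2} \le \frac{\big((r-1)(n-r+1)+1\big)\binom{n}{r-1}}{r^2}.
\]
The remaining step is purely algebraic: rewrite $\binom{n}{r-1}$ in terms of $\binom{n}{r}$ using $\binom{n}{r-1} = \frac{r}{\,n-r+1\,}\binom{n}{r}$, so that
\[
\frac{\binom{n}{r-1}}{r^2} = \frac{\binom{n}{r}}{r(n-r+1)},
\]
and then observe $\frac{(r-1)(n-r+1)+1}{r(n-r+1)} \le \frac{(r-1)(n-r+1)+1}{r(n-r)}$ — wait, that inequality goes the wrong way, so instead one keeps the sharper denominator $n-r+1$; comparing with the claimed bound, note $\frac{(r-1)(n-r+1)+1}{r(n-r+1)}$ versus $\frac{(r-1)(n-r+1)+1}{r(n-r)}$, the former is smaller, hence the stated inequality $\ex(n,\D_{r+1}^r) \le \frac{(r-1)(n-r+1)+1}{r(n-r)}\binom{n}{r}$ follows a fortiori. (If instead the intended normalization in Lemma~\ref{connec} produces a $\binom{n}{r}$ and denominator $n-r$ directly — e.g. via a slightly different counting of $(i+1)$-faces inside $\binom{n}{i+2}$ — the same chain applies with the bookkeeping adjusted.)

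The main obstacle I anticipate is getting the index bookkeeping exactly right, since the corollary is phrased for the $r$-graph $\D_{r+1}^r$ while Theorem~\ref{beta0} and Lemma~\ref{connec} are phrased with an ambient dimension that here must be $r-1$; one must be careful that "$K$ does not contain $\D_{r+2}^{r+1}$'' in Theorem~\ref{beta0} translates, under the shift, precisely to "$H$ is $\D_{r+1}^r$-free,'' and that the hypothesis "contains all possible $(r-1)$-faces'' in Lemma~\ref{connec} is legitimately arranged (handling the possibility that $H$ omits some $(r-1)$-subsets by noting such subsets can be freely added as faces without introducing $\D_{r+1}^r$, since $\D_{r+1}^r$ is determined by its $r$-faces). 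The binomial manipulation is routine once the indices are pinned down.
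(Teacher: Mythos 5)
Your proposal is correct and follows exactly the route the paper intends (the paper gives no written proof beyond ``By Lemma~\ref{connec} and Theorem~\ref{beta0}''): apply Theorem~\ref{beta0} in ambient dimension $r-1$ to get $\q_{r-2}(K)\le (r-1)(n-r+1)+1$, feed this into Lemma~\ref{connec} with $i=r-2$, and convert $\binom{n}{r-1}=\frac{r}{n-r+1}\binom{n}{r}$. Your observation that this actually yields the sharper denominator $r(n-r+1)$, from which the stated bound with $r(n-r)$ follows a fortiori, is accurate (and is consistent with the numerical values $25/2$ and $455/16$ computed in Remark~\ref{Dconterexample}).
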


\begin{rmk}\label{Density}
\em
By Corollary \ref{HypTur}, we have
$$ \ex(n,\D_{4}^3) \le \frac{1}{9}n^3- o(n^3),$$
which shows a subtle discrepancy from the bound $\frac{5}{54}n^3 +o(n^3)$ derived from Tur\'an's conjecture (Eq. \eqref{Turan}).
When considering Tur\'an's density, we have
\[\pi(\D_{r+1}^{r})\le \frac{r-1}{r},\]
which was proven by De Caen \cite{DE1983}.
\end{rmk}

\begin{rmk}\label{TuranS}
\em
By Lemma \ref{connec} and Corollary \ref{noSr}, we obtain an upper bound for the Tur\'an number of the homeomorph of $S^r$ as follows:
\begin{equation}\label{sbound}
 \ex(n,S^r) \le \Theta(n^{r+1}).
 \end{equation}
 The upper bound in \eqref{sbound} is not as sharp as those given by
 Brown, Erd\H{o}s, and S\'os in Eq. \eqref{nS2} for $r=2$,
  and by Newman and Pavelka \cite{NP2024} in Eq. \eqref{nSr} for a general $r$.
Therefore, there is a discrepancy between the spectral bound and the structural bound.
From Corollary \ref{main2} below, the complex $\T_n^{r+1}$ attaining the upper bound in \eqref{sphere} contains no any $i$-holes for $i \in [r]$, which is a very strong condition that holds in the equality case.
\end{rmk}

Finally, we prove the upper bound on the signless Laplacian spectral radii of $r$–dimensional complexes with a prescribed Betti number.

\begin{proof}[\bf Proof of Theorem \ref{gen}]
By Lemma \ref{pathcon}, we can assume that $K$ is $(r-1)$-path connected.
Let $f$ be the Perron vector of $Q_{r-1}(K)$ corresponding to the eigenvalue $q_{r-1}(K)$.
Assume that an $r$-face $F_0$ of $K$ satisfies
\[f(\p F_0)=\max\left\{f(\p F):F\in S_{r}(K)\right\}=1.\]
By \eqref{Q-equ-F}, we have
\begin{equation}\label{tbeta}
\begin{aligned}
 \q_{r-1}(K) &=\q_{r-1}(K)f(\p F_0)\\
 &=|F_0| f(\p F_0) + \sum_{F \in N^\dn(F_0)}f(\p F)\\
 &\le r+1+|N^{d}(F_0)|\\
 &\le r+1+(n-r-1)r+t=rn-r^2+t+1,
 \end{aligned}
 \end{equation}
where the last inequality follows from the fact that $K$ contains at most $t$ complexes of type $\D_{r+2}^{r+1}$, each containing the face $F_0$, given by the fact $\beta_r(K)=t$.
\end{proof}

\end{document}